\title{The Covering Numbers of the McLaughlin Group and some Primitive Groups of Low Degree}
\author{Michael Epstein}
\date{}
\newtheorem{lemma}{Lemma}[section]
\newtheorem{thm}[lemma]{Theorem}
\newtheorem*{thm*}{Theorem}
\newtheorem{prop}[lemma]{Proposition}
\begin{document}

\maketitle

\begin{abstract}
A \emph{finite cover} of a group $G$ is a finite collection $\mathcal{C}$ of proper subgroups of $G$ with the property that $\bigcup \mathcal{C} = G$. A finite group admits a finite cover if and only if it is noncyclic. More generally, it is known that a group admits a finite cover if and only if it has a finite, noncyclic homomorphic image. If $\mathcal{C}$ is a finite cover of a group $G$, and no cover of $G$ with fewer subgroups exists, then $\mathcal{C}$ is said to be a \emph{minimal cover} of $G$, and the cardinality of $\mathcal{C}$ is called the \emph{covering number} of $G$, denoted by $\sigma(G)$. Here we investigate the covering numbers of the McLaughlin sporadic simple group and some low degree primitive groups.
\end{abstract}

\section{Introduction}

A number of interesting results about covering numbers were proven in \cite{Cohn1994}, where it was conjectured that the covering number of a finite, noncyclic, solvable group is of the form $1+q$, where $q$ is the order of a chief factor of the group. This conjecture was proven by M. J. Tomkinson in \cite{Tomkinson1997}. In light of this result, much of the more recent work on covering numbers of finite groups has focused on nonsolvable groups, and in particular on simple and almost simple groups. A number of results can be found in \cite{Britnell2008, Britnell2011, Bryce1999, EMNP17, Holmes2006, Holmes2010, Kappe2010, Kappe2016, Maroti2005}. In this article, we investigate group covering numbers in some outstanding open cases from \cite{Garonzi2019} and \cite{Holmes2006}.

In Section \ref{McL}\footnotemark we determine the covering number of the McLaughlin sporadic simple group. This was first attempted by P. E. Holmes in \cite{Holmes2006}, where it was shown that $24541 \leq \sigma(McL) \leq 24553$. We will show that the upper bound of 24553 is in fact the correct covering number.

\footnotetext{Section \ref{McL} of this article is based on a portion of the author's dissertation \cite{Dissertation}, submitted in partial fulfillment of the requirements for the degree of doctor of philosophy at Florida Atlantic University.}

In \cite{Garonzi2019}, the authors investigated integers which are not covering numbers of groups, and in the process determined the covering numbers of many primitive groups of degree less than or equal to 129. However, a few difficult cases remain unsolved (see Table 2 of \cite{Garonzi2019} for upper and lower bounds). In Section \ref{Primitive} we consider some of these open cases. In particular, we find the exact covering numbers of $L_5(3)$, $P\Sigma L_2(121)$, $A_5\ \text{wr}\ 3$, $(A_5 \times A_5 \times A_5).6$, $P\Gamma L_2(125)$, and $L_7(2)$  and give improved bounds for the covering numbers of $A_7\ \text{wr}\ 2$, $L_3(4).2_2$\footnotemark, $HS:2$, $L_2(11)\ \text{wr}\ 2$, and $PGU_3(5)$.

\footnotetext{For clarity we use the same numbering for the groups $L_3(4).2$ as in \cite{Garonzi2019}. In \cite{Atlas1985}, this group is called $L_3(4).2_1$.}

\section{Preliminaries} \label{Prelim}

In this article we follow \cite{DandF} for basic group theoretic terminology and notation. We use the notation of \cite{Atlas1985} for simple groups and group structures.

The basic method we use for determining the covering numbers of finite groups is based on two simple observations: First, that one need only consider covers consisting of maximal subgroups, and second, that in order to prove that a collection $\mathcal{C}$ of proper subgroups is a cover of a group $G$ it is sufficient to show that each of the maximal cyclic subgroups is contained in a subgroup from $\mathcal{C}$. We call the maximal cyclic subgroups of $G$ the \emph{principal subgroups} of $G$ and a generator of a principal subgroup is called a \emph{principal element} of $G$. In light of these observations, the first step in determining the covering number of a group $G$ is to determine the maximal subgroups and principal subgroups of $G$ up to conjugacy.

The second step is to determine which principal subgroups are contained in which maximal subgroups. Consider a bipartite graph whose vertex set is the union of a conjugacy class $\mathcal{P}$ of principal subgroups of $G$ and a conjugacy class $\mathcal{M}$ of maximal subgroups of $G$, with an edge between $C \in \mathcal{P}$ and $H \in \mathcal{M}$ if and only if $C \leq H$. The group $G$ acts transitively on both $\mathcal{P}$ and $\mathcal{M}$ by conjugation, so any two vertices in $\mathcal{P}$ have the same degree $a_{\mathcal{P},\mathcal{M}}$ and any two vertices in $\mathcal{M}$ have the same degree $b_{\mathcal{P},\mathcal{M}}$. Moreover, counting the number of edges in the graph in two ways yields the equation
\begin{equation} \label{equation}
a_{\mathcal{P},\mathcal{M}} \lvert \mathcal{P} \rvert = b_{\mathcal{P},\mathcal{M}} \lvert \mathcal{M} \rvert.
\end{equation}
Consequently, one can construct two matrices $A = (a_{\mathcal{P},\mathcal{M}})$ and $B = (b_{\mathcal{P},\mathcal{M}})$, with rows indexed by the conjugacy classes of principal subgroups of $G$ and columns indexed by the classes of maximal subgroups, describing the incidence between the principal subgroups of $G$ and the maximal subgroups up to conjugacy. In this article we give only the matrix $A$ for each group as the entries tend to be smaller, and as noted above the entries of $B$ can easily be computed from those of $A$. In order to compute the entries of the matrices $A$ and $B$, we make use of the characters of the permutation representations of $G$ acting on the sets of right cosets of representatives of each conjugacy class of maximal subgroups. This is justified by the following well-known fact from character theory:

\begin{prop} \label{prop:char}
Let $H$ be a subgroup of $G$,  $x \in G$, $K$ be the conjugacy class of $x$, and $\theta$ be the permutation character of the action of $G$ on the right cosets of $H$. Then,
\[ \lvert K \cap H \rvert =  \dfrac{\theta(x)\lvert K \rvert}{\lvert G:H\rvert}\,.\]
\end{prop}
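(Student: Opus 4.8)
The plan is to prove this standard identity by a double-counting argument centered on the auxiliary set
\[
T = \{\, g \in G : g x g^{-1} \in H \,\}.
\]
First I would recall that, under the right-multiplication action of $G$ on the right cosets of $H$, the value $\theta(x)$ of the permutation character equals the number of cosets fixed by $x$, and that a coset $Hg$ is fixed precisely when $Hgx = Hg$, i.e. when $g x g^{-1} \in H$. The key preliminary observation is that the membership condition $g x g^{-1} \in H$ depends only on the coset $Hg$: if $g' = hg$ with $h \in H$, then $g' x (g')^{-1} = h (g x g^{-1}) h^{-1}$, which lies in $H$ if and only if $g x g^{-1}$ does, since $H$ is a subgroup. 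Hence $T$ is a union of exactly $\theta(x)$ right cosets of $H$, giving the first count $\lvert T \rvert = \lvert H \rvert\, \theta(x)$.

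For the second count I would examine the conjugation map $g \mapsto g x g^{-1}$ restricted to $T$. Its image is exactly the set $K \cap H$ of conjugates of $x$ lying in $H$, and for each fixed $y \in K \cap H$ the fiber $\{\, g : g x g^{-1} = y \,\}$ is a single coset of the centralizer $C_G(x)$, hence has cardinality $\lvert C_G(x) \rvert$. Summing over the fibers yields $\lvert T \rvert = \lvert K \cap H \rvert\, \lvert C_G(x) \rvert$. Equating the two expressions for $\lvert T \rvert$ gives
\[
\lvert K \cap H \rvert = \theta(x)\, \frac{\lvert H \rvert}{\lvert C_G(x) \rvert}.
\]

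To finish I would invoke the orbit–stabilizer relation $\lvert K \rvert = [G : C_G(x)] = \lvert G \rvert / \lvert C_G(x) \rvert$, so that $\lvert H \rvert / \lvert C_G(x) \rvert = \lvert H \rvert\, \lvert K \rvert / \lvert G \rvert = \lvert K \rvert / [G:H]$, and substituting produces the claimed formula $\lvert K \cap H \rvert = \theta(x)\, \lvert K \rvert / [G:H]$. This argument is entirely elementary, so there is no genuine obstacle; the only point requiring care is bookkeeping about conventions, namely ensuring the fixed-point condition for the chosen (right) action is correctly written as $g x g^{-1} \in H$ and that the fibers of the conjugation map are counted as cosets of the centralizer rather than of $H$.
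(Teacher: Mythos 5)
Your proof is correct and complete: the double count of $T=\{g\in G: gxg^{-1}\in H\}$ (once as a union of $\theta(x)$ right cosets of $H$, once as $\lvert K\cap H\rvert$ fibers of size $\lvert C_G(x)\rvert$), combined with orbit--stabilizer, is exactly the standard argument for this identity, and your handling of the fixed-point condition $Hgx=Hg \iff gxg^{-1}\in H$ and of the fibers as cosets of the centralizer is accurate. There is nothing in the paper to compare it against: the paper states this proposition as a ``well-known fact from character theory'' and gives no proof at all, so your argument supplies precisely the justification the paper omits.
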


With this proposition we can compute the entries of $A$ and $B$ as follows:
\begin{prop} \label{prop:char2}
Let $x \in G$, $\mathcal{P}$ be the conjugacy class of $\langle x \rangle$, $H$ be a subgroup of $G$ from conjugacy class $\mathcal{M}$, and $\theta$ be the permutation character of the action of $G$ on the right cosets of $H$. Then
\[a_{\mathcal{P},\mathcal{M}} = \dfrac{\theta(x)\lvert \mathcal{M}\rvert}{\lvert G:H\rvert}\text{ and } b_{\mathcal{P},\mathcal{M}} = \dfrac{\theta(x)\lvert \mathcal{P}\rvert}{\lvert G:H\rvert}\,.\]
\end{prop}

\begin{proof}
Let $K$ be the conjugacy class of $x$. We consider another bipartite graph, with vertex set $K \cup \mathcal{M}$ and with an edge between $y \in K$ and $J \in \mathcal{M}$ if and only if $y \in J$. Any two vertices in $K$ have the same degree $\alpha$, any two vertices in $\mathcal{M}$ have the same degree $\beta$, and $\alpha \lvert K \rvert = \beta \lvert \mathcal{M} \rvert$. By Proposition \ref{prop:char}, $\beta = \frac{\theta(x)\lvert K \rvert}{\lvert G:H\rvert}$, and since $y \in J$ if and only if $\langle y \rangle \leq J$, $a_{\mathcal{P},\mathcal{M}} = \alpha = \frac{\beta \lvert \mathcal{M}\rvert}{\lvert K \rvert} = \frac{\theta(x)\lvert \mathcal{M}\rvert}{\lvert G:H\rvert}$. The formula for $b_{\mathcal{P},\mathcal{M}}$ then follows from \eqref{equation}.
\end{proof}

Sometimes it is possible to simplify the problem and reduce the number of conjugacy classes of subgroups we must consider. For example, if the members of some conjugacy class of maximal subgroups contain no principal subgroups, then this conjugacy class may be eliminated from further consideration. On the other hand, if a principal subgroup $C$ is contained in a unique maximal subgroup $H$, then the entire conjugacy class of $H$ must be used in the cover (as we only consider covers consisting of maximal subgroups). After performing such simplifications, we obtain a set $U$ (possibly empty) of maximal subgroups that we know we must use in a cover, and submatrices $A'$ and $B'$, of $A$ and $B$ respectively, which describe the incidence between the remaining classes of subgroups.
 
The third step is to find a small cover which will be a candidate for a minimal cover. This will often, but not always, be a union of conjugacy classes of maximal subgroups of $G$. One can easily find covers of this type; they consist of the subgroups from $U$ as well as those from the conjugacy classes of maximal subgroups corresponding to a set of columns of $A'$ whose sum has no zero entries. In any case, once a cover has been found, the number of subgroups in the cover gives an upper bound on $\sigma(G)$.

We establish a lower bound for the covering number by solving a certain integer linear programming (ILP) problem. To make this more explicit, consider a cover $\mathcal{C}$ consisting of the maximal subgroups of $G$, and let $\mathcal{M}_1,\ \dots, \mathcal{M}_n$ be the remaining conjugacy classes of maximal subgroups of $G$. Define $x_j  = \lvert \mathcal{M}_j \cap \mathcal{C}\rvert$ for $1 \leq j \leq n$. For each remaining conjugacy class $\mathcal{P}$ of principal subgroups of $G$ the following inequality must hold:

\[b_{\mathcal{P},\mathcal{M}_1}x_1 + b_{\mathcal{P},\mathcal{M}_2}x_2+ \dots +b_{\mathcal{P},\mathcal{M}_n}x_n \geq \lvert \mathcal{P} \rvert.\] 

The sum of $\lvert U \rvert$ and the minimum value of the function $x_1 + \dots +x_n$ subject to these linear constraints, as well as the conditions $x_j\in \mathbb{Z}$, and $0 \leq x_j \leq \lvert \mathcal{M}_j \rvert$ for $1\leq j \leq n$, is a lower bound for $\sigma(G)$. If this lower bound is equal to the number of subgroups in a cover $\mathcal{C}$, then necessarily $\mathcal{C}$ is minimal and $\sigma(G) = \lvert \mathcal{C} \rvert$, though in general this bound may be strictly less than the actual covering number of the group. One may be able to work around this difficulty in some instances by deriving additional linear constraints by, for example, considering how the group acts on appropriate combinatorial objects. This is illustrated in the computation of the covering number of the McLaughlin group in Section \ref{McL}.

In cases where the lower bound described above is less than the number of subgroups in the best cover we have found, we can formulate a different linear programming problem to find the covering number. Let $M$ be the matrix with rows and columns indexed by the remaining principal subgroups and the remaining maximal subgroups of $G$ respectively, such that the entry in the row $C$ and column $H$ is 1 if $C\leq H$ and 0 otherwise. Then a minimal cover of $G$ consisting of maximal subgroups is then of the form $U \cup X$, where the members of $X$ are the maximal subgroups corresponding to the positions of the 1's in a (0,1)-vector $x$ whose entries have minimum possible sum subject to the condition that $Mx\geq \vec{1}$, where $\vec{1}$ denotes the column vector of all 1's of length equal to the number of rows of $M$. The benefit of this second ILP formulation is that a solution to the linear programming problem always yields the exact covering number of the group, but the trade off is that it generally has far more variables and constraints than the other linear programming problem described above, and is usually much more difficult to solve. However, even if the problem is intractable, we can get upper and lower bounds on the covering number from an incomplete attempt to solve the ILP from the best incumbent solution and best bound found by the solver at the time it is interrupted. We take this approach in Section \ref{Primitive} when investigating the covering numbers of low degree primitive groups.

\section{The McLaughlin Group} \label{McL}

The McLaughlin group, one of the sporadic simple groups, is a subgroup of index two in the full automorphism group of the McLaughlin graph, a strongly regular graph with parameters $(275,112,30,56)$. We note that the independence number of the McLaughlin graph is 22 (see \cite{Brouwer}), and that the McLaughlin group acts transitively on the vertices, the edges, and the nonedges of the McLaughlin graph. The maximal subgroups from classes $\mathcal{M}_1$, $\mathcal{M}_6$, and $\mathcal{M}_7$ are the stabilizers of the vertices, edges, and nonedges of the McLaughlin graph respectively. Tables \ref{tab:McLPCS} and \ref{tab:McLmax} give the conjugacy classes of principal and maximal subgroups of the McLaughlin group, and the matrix $A$ for the McLaughlin group is given in Table \ref{tab:McLincidence}.

\begin{table}[H]
\caption{Conjugacy classes of principal subgroups of the McLaughlin group}
\label{tab:McLPCS}
\begin{center}
\begin{tabular}{crr}
Class & Order & Class Size\\
$\mathcal{P}_1$ & 5 & 8981280\\
$\mathcal{P}_2$ & 6 & 12474000\\
$\mathcal{P}_3$ & 8 & 28066500\\
$\mathcal{P}_4$ & 9 & 11088000\\
$\mathcal{P}_5$ & 11 & 16329600\\
$\mathcal{P}_6$ & 12 & 18711000\\
$\mathcal{P}_7$ & 14 & 21384000\\
$\mathcal{P}_8$ & 30 & 7484400\\
\end{tabular}
\end{center}
\end{table}

\begin{table}[H]
\caption{Conjugacy classes of maximal subgroups of the McLaughlin group}
\label{tab:McLmax}
\begin{center}
\begin{tabular}{crrc}
Class & Order & Class Size & Structure\\
$\mathcal{M}_1$ & 3265920 & 275 & $U_4(3)$\\
$\mathcal{M}_2$ & 443520 & 2025 & $M_{22}$\\
$\mathcal{M}_3$ & 443520 & 2025 & $M_{22}$\\
$\mathcal{M}_4$ & 126000 & 7128 & $U_3(5)$\\
$\mathcal{M}_5$ & 58320 & 15400 & $3_+^{1+4}:2S_5$\\
$\mathcal{M}_6$ & 58320 & 15400 & $3^4:M_{10}$\\
$\mathcal{M}_7$ & 40320 & 22275 & $L_3(4):2$\\
$\mathcal{M}_8$ & 40320 & 22275 & $2.A_8$\\
$\mathcal{M}_9$ & 40320 & 22275 & $2^4:A_7$\\
$\mathcal{M}_{10}$ & 40320 & 22275 & $2^4:A_7$\\
$\mathcal{M}_{11}$ & 7920 & 113400 & $M_{11}$\\
$\mathcal{M}_{12}$ & 3000 & 299376 & $5_+^{1+2}:3:8$\\
\end{tabular}
\end{center}
\end{table}

\setlength{\tabcolsep}{4pt}
\begin{table}[H]
\caption{Matrix $A$ for $McL$}
\label{tab:McLincidence}
\begin{center}
\begin{tabular}{c|cccccccccccc}
& $\mathcal{M}_1$ & $\mathcal{M}_2$ & $\mathcal{M}_3$ & $\mathcal{M}_4$ & $\mathcal{M}_5$ & $\mathcal{M}_6$ & $\mathcal{M}_7$ & $\mathcal{M}_8$ & $\mathcal{M}_9$ & $\mathcal{M}_{10}$ & $\mathcal{M}_{11}$ & $\mathcal{M}_{12}$\\ \hline
$\mathcal{P}_1$ & 5 & 5 & 5 & 3 & 0 & 5 & 5 & 0 & 5 & 5 & 5 & 1\\
$\mathcal{P}_2$ & 2 & 3 & 3 & 3 & 2 & 1 & 6 & 7 & 6 & 6 & 6 & 0\\
$\mathcal{P}_3$ & 1 & 1 & 1 & 2 & 2 & 2 & 1 & 1 & 1 & 1 & 2 & 4 \\
$\mathcal{P}_4$ & 2 & 0 & 0 & 0 & 1 & 1 & 0 & 0 & 0 & 0 & 0 & 0\\
$\mathcal{P}_5$ & 0 & 1 & 1 & 0 & 0 & 0 & 0 & 0 & 0 & 0 & 1 & 0\\
$\mathcal{P}_6$ & 1 & 0 & 0 & 0 & 3 & 2 & 0 & 1 & 0 & 0 & 0 & 2\\
$\mathcal{P}_7$ & 0 & 0 & 0 & 0 & 0 & 0 & 1 & 1 & 1 & 1 & 0 & 0 \\
$\mathcal{P}_8$ & 0 & 0 & 0 & 0 & 1 & 0 & 0 & 1 & 0 & 0 & 0 & 1\\
\end{tabular}
\end{center}
\end{table}

\begin{thm}
The covering number of the McLaughlin group is 24553.
\end{thm}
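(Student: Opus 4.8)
The plan is to prove the matching bounds $\sigma(McL)\le 24553$ and $\sigma(McL)\ge 24553$. Since \cite{Holmes2006} already gives $24541\le\sigma(McL)\le 24553$, the genuine content is raising the lower bound by $12$; I expect the integer program of Section~2 to fall short on its own, so the decisive step will be a combinatorial reinterpretation of the elements of order~$9$ in terms of the McLaughlin graph. For the upper bound I would exhibit the explicit cover $\mathcal{C}=\mathcal{M}_8\cup\mathcal{M}_2\cup S$, where $S\subseteq\mathcal{M}_1$ is a set of $253$ vertex stabilizers whose $253$ vertices form a minimum vertex cover of the McLaughlin graph (the complement of a maximum coclique of size $22$, as recorded in \cite{Brouwer}). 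Reading Table~\ref{tab:McLincidence}, $\mathcal{M}_2$ and $\mathcal{M}_8$ jointly cover every principal class except $9AB$: $\mathcal{M}_2$ meets $5B,6B,8A,11AB$ and $\mathcal{M}_8$ meets $6B,8A,12A,14AB,30AB$, each with positive coefficient, so taking \emph{all} subgroups in those two classes covers all of those classes. For $9AB$ I would argue that every element of order $9$ fixes exactly two vertices (coefficient $2$ in $\mathcal{M}_1$) which are adjacent (coefficient $0$ in the nonedge stabilizers $\mathcal{M}_7$), so a vertex cover $S$ necessarily contains one of them, placing the element in a stabilizer from $S$. Thus $\mathcal{C}$ is a cover of size $22275+2025+253=24553$.

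For the lower bound, let $\mathcal{C}$ be an arbitrary cover, which I may assume consists of maximal subgroups, and set $x_j=\lvert\mathcal{C}\cap\mathcal{M}_j\rvert$. Using Proposition~\ref{prop:char} to pass from the matrix $A$ to the number $b=a\lvert K\rvert/\lvert\mathcal{M}\rvert$ of elements of $K$ in each subgroup, I would organize the constraints into three blocks whose index sets are disjoint, dictated by which maximal classes meet each of three principal classes. First, $14AB$ lies only in $\mathcal{M}_7,\mathcal{M}_8,\mathcal{M}_9,\mathcal{M}_{10}$, each such subgroup containing $5760$ elements of order $14$, so from $\lvert 14AB\rvert=128304000$ I get $x_7+x_8+x_9+x_{10}\ge 22275$. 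Second, $11AB$ lies only in $\mathcal{M}_2,\mathcal{M}_3,\mathcal{M}_{11}$, with $80640,80640,1440$ elements respectively; minimizing $x_2+x_3+x_{11}$ subject to $80640(x_2+x_3)+1440\,x_{11}\ge 163296000$ yields $x_2+x_3+x_{11}\ge 2025$. Third, $9AB$ lies only in $\mathcal{M}_1,\mathcal{M}_5,\mathcal{M}_6$, the hard block. Because the three index sets are disjoint and $x_4,x_{12}\ge 0$,
\[
\sum_{j=1}^{12}x_j\ \ge\ (x_1+x_5+x_6)+(x_2+x_3+x_{11})+(x_7+x_8+x_9+x_{10}).
\]

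The obstacle is the $9AB$ block: its linear relaxation only gives $x_1+x_5+x_6\ge 138$ (each $\mathcal{M}_1$ subgroup holds $483840$ of the $66528000$ elements of order $9$), which is far too weak and is precisely why a pure integer program cannot reach $24553$. To repair this I would use the graph. Each element of order $9$ fixes a unique edge of the McLaughlin graph, and each of the $15400$ edges is fixed by exactly $4320$ such elements (coefficient $1$ in the edge stabilizers $\mathcal{M}_6$, giving $15400\cdot 4320=66528000$). Writing $S=\mathcal{C}\cap\mathcal{M}_1$ as a set of vertices and $W$ for its complement, the order-$9$ elements \emph{not} covered by $S$ are exactly those whose fixed edge lies inside $W$, numbering $4320\,e(W)$ where $e(W)$ is the number of induced edges. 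These can be covered only by subgroups from $\mathcal{M}_5\cup\mathcal{M}_6$, each containing at most $4320$ elements of order $9$, so $x_5+x_6\ge e(W)$. Hence $x_1+x_5+x_6\ge(275-\lvert W\rvert)+e(W)$. Deleting one endpoint of each induced edge turns $W$ into a coclique of size at least $\lvert W\rvert-e(W)$, so $\lvert W\rvert-e(W)\le 22$ by the independence number, and therefore $x_1+x_5+x_6\ge 253$.

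Combining the three blocks gives $\sigma(McL)\ge 253+2025+22275=24553$, which matches the cover, so $\sigma(McL)=24553$. The main obstacle, as indicated, is the $9AB$ block: the routine integer-programming bound is strictly smaller, and the crux is recognizing that covering the order-$9$ elements by vertex stabilizers is exactly a vertex-cover problem on the McLaughlin graph, letting the independence number $22$ force the extra $253-138$ subgroups that close the gap.
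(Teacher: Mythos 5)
Your proposal is correct and follows essentially the same route as the paper: the identical cover $S \cup \mathcal{M}_2 \cup \mathcal{M}_8$ with $S$ the stabilizers of the complement of a maximum independent set, the same three disjoint constraint blocks from the classes $14\mathrm{AB}$, $11\mathrm{AB}$, and $9\mathrm{AB}$, and the same replacement of the weak bound $x_1+x_5+x_6 \geq 138$ by the vertex-cover argument on the McLaughlin graph giving $x_1+x_5+x_6 \geq 253$. The only cosmetic differences are that you justify the adjacency of the two fixed vertices of an order-$9$ element directly from Table \ref{tab:McLincidence}, whereas the paper asserts it as a property of the action on the graph.
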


\begin{proof}
We show that there exists a cover $\mathcal{C} \subseteq \mathcal{M}_1 \cup \mathcal{M}_2 \cup \mathcal{M}_8$ with $\lvert \mathcal{C} \rvert = 24553$ as follows: First, observe that $\mathcal{M}_2 \cup \mathcal{M}_8$ is sufficient to cover all of the principal elements of $McL$ except for those of order 9, which generate the cyclic subgroups from $\mathcal{P}_4$, and that these are contained in the subgroups from class $\mathcal{M}_1$. Each element of order 9 fixes two adjacent vertices in the McLaughlin graph, the edge between them, and no nonedges. As the independence number of the McLaughlin graph is 22, we may choose an independent set $I$ consisting of 22 vertices of the McLaughlin graph. Let $S$ be the subset of $\mathcal{M}_1$ consisting of the stabilizers of the vertices of the McLaughlin graph which are not in $I$. Then $\lvert S \rvert = 253$, and given any element $x$ of order 9 in $McL$, at least one of the two subgroups from $\mathcal{M}_1$ which contain $x$ is in $S$. Consequently, $\mathcal{C} = S \cup \mathcal{M}_2 \cup \mathcal{M}_8$ is a cover of $McL$ with $\lvert \mathcal{C} \rvert = 253 +2025 +22275 = 24553$. As a result, $\sigma(McL) \leq 24553$.

We now prove that no smaller cover can exist. Suppose that $\mathcal{C}$ is a minimal cover of $McL$ consisting of maximal subgroups. For $1 \leq j \leq 12$, let $x_j = \lvert \mathcal{C} \cap \mathcal{M}_j \rvert$. The cyclic subgroups from $\mathcal{P}_5$ of order 11 are contained only in the maximal subgroups from classes $\mathcal{M}_2$, $\mathcal{M}_3$, and $\mathcal{M}_{11}$. Each subgroup $H \in \mathcal{M}_2 \cup \mathcal{M}_3$ contains 8064 of these cyclic subgroups and each subgroup $H \in \mathcal{M}_{11}$ contains 144 of them. Since $\mathcal{C}$ covers all of the elements of order 11,
\[8064(x_2 +x_3) +144x_{11} \geq \lvert \mathcal{P}_5 \rvert = 16329600, \]
from which it follows that $x_2 +x_3 +x_{11} \geq 2025$. Now, the cyclic subgroups of order 14 from $\mathcal{P}_7$ are contained only within the maximal subgroups from classes $\mathcal{M}_i$ for $7 \leq i \leq 10$, and a subgroup from any of these classes contains exactly 960 members of $\mathcal{P}_7$. Hence,
\[ 960(x_7 +x_8 + x_9 + x_{10}) \geq \lvert \mathcal{P}_7 \rvert = 21384000, \]
from which we may deduce that $x_7 +x_8 + x_9 + x_{10} \geq 22275$. A similar analysis for the subgroups from $\mathcal{P}_4$ will yield the inequality $x_1 +x_5 +x_6 \geq 138$, but this is insufficient for our purposes. We consider the action of the principal subgroups of $McL$ of order 9 on the McLaughlin graph to obtain a stronger inequality. Let $W$ be the set of vertices of the McLaughlin graph whose stabilizers are \emph{not} contained in $\mathcal{C}$. Observe that $x_1 + \lvert W \rvert = 275$. Now, an cyclic subgroup $\langle x\rangle$ of order 9 in $McL$ is left uncovered by the subgroups from $\mathcal{M}_1 \cap \mathcal{C}$ if and only if the unique edge $e$ of the McLaughlin graph fixed by $x$ is an edge of the subgraph induced by $W$. The stabilizer in $McL$ of an edge of the McLaughlin graph contains 720 cyclic subgroups of order 9, so the total number of members of $\mathcal{P}_4$ left uncovered by the subgroups from $\mathcal{M}_1 \cap \mathcal{C}$ is $720n$, where $n$ is the number of edges in the subgraph of the McLaughlin graph induced by $W$. Aside from the subgroups from class $\mathcal{M}_1$, the only maximal subgroups which contain members of $\mathcal{P}_4$ are those from $\mathcal{M}_5 \cup \mathcal{M}_6$, each of which contains 720 of them. Consequently, at least $n$ subgroups from $\mathcal{M}_5 \cup \mathcal{M}_6$ are needed to cover the remaining cyclic subgroups of order 9. Now, the McLaughlin graph has independence number 22, so the subgraph induced by $W$ can have no independent set of more than 22 vertices. Therefore $n \geq \lvert W \rvert - 22$. Thus,
\[x_1 +x_5 + x_6 \geq x_1 +n \geq x_1 + \lvert W \rvert -22 = 275 - 22 = 253.\]
It follows from the inequalities $x_2 +x_3 +x_{11} \geq 2025$, $x_7 +x_8 + x_9 + x_{10} \geq 22275$, and $x_1 +x_5 + x_6 \geq 253$ that $\sigma(McL) = \lvert \mathcal{C} \rvert = \sum_{j=1}^{12} x_j \geq 2025 +22275 +253 = 24553$, which completes the proof.
\end{proof}

This settles one of two open cases for which upper and lower bounds were given in \cite{Holmes2006}. The other such case, that of the Janko group $J_1$, remains open, though improved bounds were given in \cite{Kappe2016}.

\section{Some Primitive Groups of Low Degree} \label{Primitive}
In this section we take a computational approach to investigating the covering numbers of some of the remaining primitive groups of degree less than 129 from Table 2 of \cite{Garonzi2019}. The approach is based on linear programming and follows the method described in Section \ref{Prelim}. All of the group-theoretic computations were done using Magma \cite{Magma}, and the linear programming problems were solved using Gurobi \cite{Gurobi}. In many cases we are unable to solve the larger ILP problem in a reasonable amount of time; in this case we can still obtain upper and lower bounds for the covering number from Gurobi's best incumbent solution and best lower bound at the time the computation is interrupted. We summarize our results in Tables \ref{tab:coveringnumbers} and \ref{tab:bounds}.

\begin{table}[H]
    \centering
    \begin{tabular}{c|cc|c}
      & Previous & Previous &\\
     Group & Lower Bound & Upper bound & Covering Number\\
      \hline
      $L_5(3)$ & 393030144 & -- & 393031475\\
      $P\Sigma L_2(121)$ & 671 & 794 & 794\\
       $A_5\ \text{wr}\ 3$ & 216 & 342 & 317\\
       $(A_5 \times A_5 \times A_5).6$ & 1000 & 1217 & 1127\\
       $P\Gamma L_2(125)$ & 7750 & 7876 & 7876\\
       $L_7(2)$ & 184308203520 & -- & 184308218125
    \end{tabular}
    \caption{Exact covering numbers of some of the low degree primitive groups from the list of open cases in \cite{Garonzi2019}.}
    \label{tab:coveringnumbers}
\end{table}

\begin{table}[H]
    \centering
    \begin{tabular}{c|cc|cc}
      & Previous & Previous & New & New\\
     Group & Lower Bound & Upper bound & Lower Bound & Upper Bound\\
      \hline
      $A_7\ \text{wr}\ 2$ & 447 & 667 & 460 & 667\\
      $L_3(4).2_2$ & 138 & 166 & 144 & 166\\
       $HS:2$ & 11859 & 22375 & 15127 & 22375\\
       $L_2(11)\ \text{wr}\ 2$ & 570 & 926 & 721 & 817\\
       $PGU_3(5)$ & 6000 & 6526 & 6307 & 6378
    \end{tabular}
    \caption{Improved upper and lower bounds for the covering numbers of some of the low degree primitive groups from the list of open cases in \cite{Garonzi2019}.}
    \label{tab:bounds}
\end{table}

We illustrate our method by presenting two cases in detail. The others are similar.

\subsection{$L_5(3)$}
$L_3(5)$ has 17 conjugacy classes of principal subgroups and 8 classes of maximal subgroups. These are as in Tables \ref{tab:L53PCS} and \ref{tab:L53max}. We note that each member of $\mathcal{P}_{17}$ is contained in a unique maximal subgroup of $L_5(3)$, which is from $\mathcal{M}_8$, so we must use all 393030144 subgroups from that class in the cover. The matrix $A'$ is given in Table \ref{tab:L53A'}.\\

\begin{table}[H]
\caption{Conjugacy classes of principal subgroups of $L_5(3)$}
\label{tab:L53PCS}
\begin{center}
\begin{tabular}{crr}
Class & Order & Class Size\\
$\mathcal{P}_1$ & 6 & 366949440\\
$\mathcal{P}_2$ & 6 & 366949440\\
$\mathcal{P}_3$ & 6 & 2201696640\\
$\mathcal{P}_4$ & 8 & 1857681540\\
$\mathcal{P}_5$ & 8 & 928840770\\
$\mathcal{P}_6$ & 9 & 489265920\\
$\mathcal{P}_7$ & 12 & 1238454360\\
$\mathcal{P}_8$ & 18 & 733898880\\
$\mathcal{P}_9$ & 24 & 825636240\\
$\mathcal{P}_{10}$ & 24 & 825636240\\
$\mathcal{P}_{11}$ & 24 & 1238454360\\
$\mathcal{P}_{12}$ & 24 & 137606040\\
$\mathcal{P}_{13}$ & 26 & 1524251520\\
$\mathcal{P}_{14}$ & 78 & 508083840\\
$\mathcal{P}_{15}$ & 80 & 743072616\\
$\mathcal{P}_{16}$ & 104 & 381062880\\
$\mathcal{P}_{17}$ & 121 & 393030144\\
\end{tabular}
\end{center}
\end{table}

\begin{table}[H]
\caption{Conjugacy classes of maximal subgroups of $L_5(3)$}
\label{tab:L53max}
\begin{center}
\begin{tabular}{crrc}
Class & Order & Class Size & Structure\\
$\mathcal{M}_1$ & 1965150720 & 121 & $3:(3^3:2).L_4(3).2$\\
$\mathcal{M}_2$ & 1965150720 & 121 & $3:(3^3:2).L_4(3).2$\\
$\mathcal{M}_3$ & 196515072 & 1210 & $3^6.Q_8.S_3.L_3(3)$\\
$\mathcal{M}_4$ & 196515072 & 1210 & $3^6.Q_8.S_3.L_3(3)$\\
$\mathcal{M}_5$ & 51840 & 4586868 & $C_2(3).2$\\
$\mathcal{M}_6$ & 7920 & 30023136 & $M_{11}$\\
$\mathcal{M}_7$ & 7920 & 30023136 & $M_{11}$\\
$\mathcal{M}_8$ & 605 & 393030144 & $121:5$\\
\end{tabular}
\end{center}
\end{table}

\setlength{\tabcolsep}{4pt}
\begin{table}[H]
\caption{Matrix $A'$ for $L_5(3)$}
\label{tab:L53A'}
\begin{center}
\begin{tabular}{c|ccccccc}
& $\mathcal{M}_1$ & $\mathcal{M}_2$ & $\mathcal{M}_3$ & $\mathcal{M}_4$ & $\mathcal{M}_5$ & $\mathcal{M}_6$ & $\mathcal{M}_7$\\ \hline
$\mathcal{P}_1$ & 5 & 5 & 9 & 9 & 0 & 0 & 0\\
$\mathcal{P}_2$ & 5 & 5 & 8 & 8 & 36 & 0 & 0\\
$\mathcal{P}_3$ & 2 & 2 & 3 & 3 & 0 & 9 & 9\\
$\mathcal{P}_4$ & 1 & 1 & 2 & 2 & 0 & 8 & 8\\
$\mathcal{P}_5$ & 1 & 1 & 2 & 2 & 8 & 0 & 0\\
$\mathcal{P}_6$ & 1 & 1 & 1 & 1 & 9 & 0 & 0\\
$\mathcal{P}_7$ & 2 & 2 & 3 & 3 & 0 & 0 & 0\\
$\mathcal{P}_8$ & 2 & 2 & 2 & 2 & 0 & 0 & 0\\
$\mathcal{P}_9$ & 1 & 1 & 2 & 2 & 0 & 0 & 0\\
$\mathcal{P}_{10}$ & 1 & 1 & 1 & 1 & 0 & 0 & 0\\
$\mathcal{P}_{11}$ & 2 & 2 & 3 & 3 & 0 & 0 & 0\\
$\mathcal{P}_{12}$ & 4 & 4 & 5 & 5 & 0 & 0 & 0\\
$\mathcal{P}_{13}$ & 2 & 2 & 1 & 1 & 0 & 0 & 0\\
$\mathcal{P}_{14}$ & 1 & 1 & 1 & 1 & 0 & 0 & 0\\
$\mathcal{P}_{15}$ & 1 & 1 & 0 & 0 & 0 & 0 & 0\\
$\mathcal{P}_{16}$ & 0 & 0 & 1 & 1 & 0 & 0 & 0\\
\end{tabular}
\end{center}
\end{table}

Observe that $\mathcal{C} = \mathcal{M}_1 \cup \mathcal{M}_3 \cup \mathcal{M}_8$ is a cover of size 393031475, and therefore $\sigma(L_5(3)) \leq 393031475$. On the other hand, it is necessary to use all 393030144 subgroups from class $\mathcal{M}_8$ in order to cover the cyclic subgroups of order 121, and we must use at least 121 subgroups from $\mathcal{M}_1 \cup \mathcal{M}_2$ to cover the members of $\mathcal{P}_{15}$ and at least 1210 subgroups from $\mathcal{M}_3 \cup \mathcal{M}_4$ to cover the members of $\mathcal{P}_{16}$. Consequently $\sigma(L_5(3)) = 393031475$.

\subsection{$(A_5\times A_5 \times A_5).6$}
The group $(A_5\times A_5 \times A_5).6$ has 19 conjugacy classes of principal subgroups and 6 classes of maximal subgroups, described in Tables \ref{tab:A53.6PCS} and \ref{tab:A53.6max}. We note that each member of $\mathcal{P}_{17}$ is contained in a unique maximal subgroup, which is from $\mathcal{M}_5$. Therefore it is necessary to use all 1000 subgroups from $\mathcal{M}_5$ in the cover. Likewise, we must use the subgroup from  $\mathcal{M}_1$ in order to cover the members of $\mathcal{P}_8 \cup \mathcal{P}_9 \cup \mathcal{P}_{10} \cup \mathcal{P}_{11}$. The only classes of principal subgroups whose members are not covered by $\mathcal{M}_1 \cup \mathcal{M}_5$ are $\mathcal{P}_{12}$ and $\mathcal{P}_{15}$. We present the matrix $A'$ in Table \ref{tab:A53.6A'}.\\

\begin{table}[H]
\caption{Conjugacy classes of principal subgroups of $(A_5\times A_5 \times A_5).6$}
\label{tab:A53.6PCS}
\begin{center}
\begin{tabular}{crr}
Class & Order & Class Size\\
$\mathcal{P}_1$ & 5 & 2592\\
$\mathcal{P}_2$ & 6 & 3000\\
$\mathcal{P}_3$ & 6 & 6000\\
$\mathcal{P}_4$ & 6 & 36000\\
$\mathcal{P}_5$ & 10 & 3240\\
$\mathcal{P}_6$ & 10 & 3240\\
$\mathcal{P}_7$ & 10 & 4050\\
$\mathcal{P}_8$ & 12 & 4500\\
$\mathcal{P}_9$ & 12 & 4500\\
$\mathcal{P}_{10}$ & 12 & 9000\\
$\mathcal{P}_{11}$ & 12 & 13500\\
$\mathcal{P}_{12}$ & 12 & 54000\\
$\mathcal{P}_{13}$ & 15 & 2160\\
$\mathcal{P}_{14}$ & 15 & 3600\\
$\mathcal{P}_{15}$ & 15 & 21600\\
$\mathcal{P}_{16}$ & 15 & 2160\\
$\mathcal{P}_{17}$ & 18 & 24000\\
$\mathcal{P}_{18}$ & 30 & 2700\\
$\mathcal{P}_{19}$ & 30 & 2700\\
\end{tabular}
\end{center}
\end{table}

\begin{table}[H]
\caption{Conjugacy classes of maximal subgroups of $(A_5\times A_5 \times A_5).6$}
\label{tab:A53.6max}
\begin{center}
\begin{tabular}{crrc}
Class & Order & Class Size & Structure\\
$\mathcal{M}_1$ & 432000 & 1 & $A_5.A_5.S_5$\\
$\mathcal{M}_2$ & 648000 & 1 & $A_5\ \text{wr} \ 3$\\
$\mathcal{M}_3$ & 10368 & 125 & $2^6.3^3.6$\\
$\mathcal{M}_4$ & 6000 & 216 & $5^3.A_4.4$\\
$\mathcal{M}_5$ & 1296 & 1000 & $3^3.(2^2 \times A_4)$\\
$\mathcal{M}_6$ & 360 & 3600 & $3\times S_5$\\
\end{tabular}
\end{center}
\end{table}

\setlength{\tabcolsep}{4pt}
\begin{table}[H]
\caption{Matrix $A'$ for $(A_5\times A_5 \times A_5).6$}
\label{tab:A53.6A'}
\begin{center}
\begin{tabular}{c|cccc}
& $\mathcal{M}_2$ & $\mathcal{M}_3$ & $\mathcal{M}_4$ & $\mathcal{M}_6$\\ \hline
$\mathcal{P}_{12}$ & 0 & 1 & 2 & 1\\
$\mathcal{P}_{15}$ & 1 & 0 & 1 & 1\\
\end{tabular}
\end{center}
\end{table}

We can cover the remaining principal subgroups with the 126 subgroups from $\mathcal{M}_2 \cup \mathcal{M}_3$. Moreover, covering the members of $\mathcal{P}_{15}$ without using the subgroup from $\mathcal{M}_2$ requires at least 216 subgroups from $\mathcal{M}_4 \cup \mathcal{M}_6$, and will not yield a minimal cover. Therefore we must use the subgroup from $\mathcal{M}_2$, and we can complete a minimal cover by finding a minimal subset of $\mathcal{M}_3 \cup \mathcal{M}_4 \cup \mathcal{M}_6$ which covers the elements of $\mathcal{P}_{12}$. Therefore we construct the $54000 \times 3941$ (0,1)-incidence matrix between the members of $\mathcal{P}_{12}$ and those of $\mathcal{M}_3 \cup \mathcal{M}_4 \cup \mathcal{M}_6$. We use Gurobi to solve the corresponding linear programming problem, and find that the minimum number of subgroups required to cover the members of $\mathcal{P}_{12}$ is indeed 125. Therefore $\mathcal{M}_1 \cup \mathcal{M}_2 \cup \mathcal{M}_3 \cup \mathcal{M}_5$ is a minimal cover of $(A_5\times A_5 \times A_5).6$, and $\sigma((A_5\times A_5 \times A_5).6) = 1127$.


\begin{thebibliography}{99}

\bibitem{Magma} Wieb Bosma, John Cannon, and Catherine Playoust, \textit{The Magma algebra system. I. The user language}, J. Symbolic Comput., 24 (1997), 235–265.

\bibitem{Britnell2008} J. R. Britnell, A. Evseev, R. M. Guralnick, P. E. Holmes, and A. Mar\'{o}ti, \textit{Sets of elements that pairwise generate a linear group} J. Combin. Theory Ser. A 115 (2008), no. 3, 442-465. 

\bibitem{Britnell2011} J. R. Britnell, A. Evseev, R. M. Guralnick, P. E. Holmes, and A. Mar\'{o}ti, \textit{Corrigendum to "Sets of elements that pairwise generate a linear group''} J. Combin. Theory Ser. A 118 (2011), no. 3, 1152-1153. 

\bibitem{Brouwer} A. E. Brouwer, \textit{The McLaughlin Graph}, \url{www.win.tue.nl/~aeb/graphs/McL.html}.

\bibitem{Bryce1999} R. Bryce, V. Fedri, and L. Serena, \textit{Subgroup coverings of some linear groups}, Bull. Austral. Math. Soc. 60 (1999) 227-239.

\bibitem{Cohn1994} J. H. E. Cohn, \textit{On n-sum groups}, Math. Scand. 75 (1994) 44--58.

\bibitem{Atlas1985} J. H. Conway, R. T. Curtis, S. P. Norton, R. A. Parker, and R. A. Wilson, \textit{Atlas of Finite Groups}, Clarendon Press, Oxford, 1985.

\bibitem{DandF} David S. Dummit and Richard M. Foote, \textit{Abstract Algebra}, third edition, John Wiley and Sons Inc., Hoboken, NJ, 2004.

\bibitem{Dissertation} M. Epstein, \textit{The Covering Numbers of some Finite Simple Groups} (doctoral dissertation), 2019. Available from the ProQuest Dissertations and Theses database (ProQuest number 13857989).

\bibitem{EMNP17} M. Epstein, S. S. Magliveras, and D. Nikolova-Popova, {\it The covering numbers of $\mathbb{A}_9$ and $\mathbb{A}_{11}$}, J. of Combin. Math. and Comb. Computing, 101 (2017), 23--36.

\bibitem{Garonzi2019} M. Garonzi, L. C. Kappe, and E. Swartz, \textit{On Integers that are not Covering Numbers of Groups}, Experimental Mathematics (2019), DOI: 10.1080/10586458.2019.1636425.

\bibitem{Gurobi} Gurobi Optimization, LLC, \textit{Gurobi Optimizer Reference Manual}, 2018, http://www.gurobi.com.

\bibitem{Holmes2006} P. E. Holmes, \textit{Subgroup coverings of some sporadic groups}, JCT (A) 113 (2006) 1204-1213.

\bibitem{Holmes2010} P.E. Holmes and A. Mar\'{o}ti, \textit{Pairwise Generating and Covering Sporadic simple groups}, J. Algebra, 324 (2010), no. 1, 25-35.

\bibitem{Kappe2010} L. C. Kappe and J. L. Redden, \textit {On the covering number of small alternating groups},  Contemporary Math., vol 511, (2010) 109-125.

\bibitem{Kappe2016} L. C. Kappe, D. Nikolova-Popova, and E. Swartz, \textit {On the covering number of small symmetric groups and some sporadic simple groups},  Groups Complex. Cryptol. 8 (2016), no. 2, 135-154.

\bibitem{Maroti2005} A. Mar\'{o}ti, \textit{Covering the symmetric groups with proper subgroups}, 
J. Combin. Theory Ser. A 110 (1) (2005) 97-111.

\bibitem{Tomkinson1997}  M. J. Tomkinson, \textit{Groups as the union of proper subgroups}, Math. Scand. 81 (1997) 191-198.

\end{thebibliography}
\end{document}